\numberwithin{equation}{section}
\numberwithin{figure}{section}
\numberwithin{table}{section}
\theoremstyle{plain}
\newtheorem{thm}{\protect\theoremname}[section]
\theoremstyle{plain}
\newtheorem{conjecture}[thm]{\protect\conjecturename}
\theoremstyle{plain}
\newtheorem{prop}[thm]{\protect\propositionname}
\theoremstyle{definition}
\newtheorem{defn}[thm]{\protect\definitionname}
\theoremstyle{plain}
\newtheorem{lem}[thm]{\protect\lemmaname}
\newlist{casenv}{enumerate}{4}
\setlist[casenv]{leftmargin=*,align=left,widest={iiii}}
\setlist[casenv,1]{label={{\itshape\ \casename} \arabic*.},ref=\arabic*}
\setlist[casenv,2]{label={{\itshape\ \casename} \roman*.},ref=\roman*}
\setlist[casenv,3]{label={{\itshape\ \casename\ \alph*.}},ref=\alph*}
\setlist[casenv,4]{label={{\itshape\ \casename} \arabic*.},ref=\arabic*}
\subjclass[2020]{05B35}
\providecommand{\casename}{Case}
\providecommand{\conjecturename}{Conjecture}
\providecommand{\definitionname}{Definition}
\providecommand{\lemmaname}{Lemma}
\providecommand{\propositionname}{Proposition}
\providecommand{\theoremname}{Theorem}
\begin{document}
\title[The CCI Conjecture for Intersection size $\le7$]{The Circuit-Cocircuit Intersection Conjecture for Intersection Size
$\le7$}
\author{Jaeho Shin}
\address{Department of Mathematical Sciences, Seoul National University, Gwanak-ro
1, Gwanak-gu, Seoul 08826, South Korea}
\email{j.shin@snu.ac.kr}
\keywords{circuit-cocircuit intersection conjecture}
\begin{abstract}
A \emph{circuit-cocircuit intersection}, or a \emph{CCI} for short,
of a matroid is the intersection of a circuit and a cocircuit. Oxley
conjectured (1992) that a matroid with a CCI of size $k\ge4$ has
a CCI of size $k-2$. We show that the conjecture holds for $k\le7$.
\end{abstract}

\maketitle

\section{Introduction}

For a matroid $M$, a \emph{circuit-cocircuit intersection}, or a
\emph{CCI} for short, is a subset of its ground set $E(M)$ that arises
as the intersection of a circuit and a cocircuit. The collection of
CCIs is closed under dualization: a CCI of $M$ is again a CCI of
the dual matroid $M^{\ast}$. Moreover, any CCI of a minor of $M$
is again a CCI of $M$, which Oxley discovered in his paper \cite{Oxl84}
published in 1984. Later in 1992, he conjectured: 
\begin{conjecture}
\label{conj:Oxley}A matroid with a CCI of size $k\ge4$ has a CCI
of size $k-2$.
\end{conjecture}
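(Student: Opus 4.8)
The plan is to lean on the two structural tools already in hand---the self-duality of the family of CCIs and its closure under taking minors---to peel the witnessing pair down to a highly constrained model, and then to settle that model by a finite analysis that stays tractable precisely when $k\le 7$. So suppose $C$ is a circuit and $C^{*}$ a cocircuit with $A=C\cap C^{*}$ and $|A|=k$. Writing $B=C\setminus C^{*}$ and $D=C^{*}\setminus C$, I would first pass to the minor $N=M/B\setminus D$. Contracting an element of the circuit $C$ lying outside $C^{*}$ shrinks $C$ to $C\setminus\{b\}$ while leaving $C^{*}$ a cocircuit, so contracting all of $B$ turns $C$ into the circuit $A$ and keeps $C^{*}$; dually, deleting the elements of $D$ shrinks $C^{*}$ to $A$ and keeps $A$ a circuit. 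Thus in $N$ the set $A$ is simultaneously a circuit and a cocircuit, and since any CCI of $N$ is a CCI of $M$, it suffices to find a CCI of size $k-2$ in $N$. Equivalently, $A$ is a circuit whose complement $R=E(N)\setminus A$ is a hyperplane.

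Next I would shrink $N$ itself. While the rank exceeds $k-1$ there is a point $p\in R$ outside $\operatorname{cl}(A)$, and contracting it lowers the rank by one while keeping $A$ both a circuit and a cocircuit (it stays minimally dependent since $p\notin\operatorname{cl}(A)$, and it avoids $p$ so it survives as a cocircuit). Dually, while the corank exceeds $k-1$ there is a point $q\in R$ outside $\operatorname{cl}^{*}(A)$ whose deletion lowers the corank by one and preserves both properties. Since contraction alters only the rank and deletion only the corank, I can drive both to $k-1$ independently. This lands on a bounded model: a rank-$(k-1)$ matroid $N$ on $2k-2$ elements in which $A$ (with $|A|=k$) is a spanning circuit and $R$ (with $|R|=k-2$) is an independent hyperplane, and $R\cup\{a\}$ is a basis for every $a\in A$.

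In this model the cleanest way to manufacture a CCI of size $k-2$ is to keep $C'=A$ and hunt for a cocircuit meeting $A$ in exactly $k-2$ elements---equivalently, a hyperplane $H$ with $|A\cap H|=2$. Because $A$ is a circuit, no three of its elements are dependent, so the line $\operatorname{cl}(\{a_i,a_j\})$ already meets $A$ in exactly $\{a_i,a_j\}$; the task is to extend it to a hyperplane, using the $k-2$ independent points of $R$, without absorbing a third element of $A$. For $k=4$ the line itself is a hyperplane and we are finished immediately. For $k=5,6,7$ one must choose the pair $\{a_i,a_j\}$ together with the spanning points drawn from $R$ so that the resulting hyperplane avoids the remaining elements of $A$, and this already requires an honest argument rather than a one-line observation.

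The hard part will be exactly this last existence step. A priori the other $k-2$ elements of $A$ could block every candidate hyperplane through a fixed pair, and when the simple choice $C'=A$ fails one is forced to exhibit a genuinely different circuit--cocircuit pair with intersection $k-2$. I would contain the blow-up by first assuming $N$ connected and then exploiting the self-duality of the bounded model (the property ``$A$ is a circuit and a cocircuit'' is preserved under $N\mapsto N^{*}$) to halve the casework, before classifying the finitely many configurations of the $k$ circuit-points relative to the flats spanned by subsets of $R$. This classification is what pins down the range: the number and intricacy of the configurations grow quickly with $k$, the analysis closes only through $k=7$, and the case $k\ge 8$ is what remains out of reach.
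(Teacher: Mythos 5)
Your reduction to the bounded model is sound, but it is exactly the paper's starting point: it is Oxley's result (Proposition \ref{prop:Oxley}), cited from \cite{Oxl84}, and your minor-taking argument is essentially a sketch of its proof. Your target is also the right one: a hyperplane $H$ of the envelope with $\left|A\cap H\right|=2$ yields the desired CCI, and for $k=4$ the line $\operatorname{cl}(\{a_{i},a_{j}\})$ does the job, as you say. But from $k=5$ onward the proposal stops precisely where the theorem begins. You write that the remaining existence step ``requires an honest argument'' and defer it to ``classifying the finitely many configurations,'' yet no such classification is carried out, no invariant is proposed to organize it, and no reason is given why it should close at $k=7$. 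That classification is the entire content of the result: the paper introduces hyperplane- and cohyperplane-partitions of $X$ with respect to $(k-3)$-subsets of $Y$, proves four lemmas about how such partitions interact (Lemmas \ref{lem:tool-1}--\ref{lem:tool-4}), and then runs a case analysis on the integer partitions of $7$ (namely $4^{1}3^{1}$, $3^{2}1^{1}$, $4^{1}1^{3}$, $3^{1}1^{4}$, $1^{7}$) that these can induce. None of that machinery, nor any substitute for it, appears in your proposal.

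A second, more specific worry: your endgame fixes the circuit to be $A$ itself and hunts only for a cocircuit meeting it in $k-2$ points. The paper's argument shows this is too rigid: in several of its cases the size-$(k-2)$ CCI is realized as the complement of $H\cup H^{\ast}$ where \emph{neither} the hyperplane $H$ nor the cohyperplane $H^{\ast}$ is the one complementary to $A$, so both the circuit and the cocircuit change (for instance, Lemma \ref{lem:1and1,1} uses $J\cup\{x\}$ and $J^{\ast}\cup\{x^{\ast}\}$ with $J\neq J^{\ast}$, and Lemma \ref{lem:tool-1} builds a new hyperplane $\left(J\cap J'\right)\cup\{x_{1},x_{2}\}$ from two interacting partitions). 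You do acknowledge that one may be ``forced to exhibit a genuinely different circuit--cocircuit pair,'' but acknowledging the difficulty is not resolving it. As it stands the proposal is a correct reduction plus a statement of intent, not a proof.
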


When $k-2$ is replaced by $k-1$, the statement does not hold. For
instance, the matroid $N$ represented by the arrangement of six lines
shown in Figure \ref{fig:SixLines} serves as a counterexample with
$k=4$. Note that $N$ is a matroid whose ground set has size $6$,
which differs from a non-Fano matroid.
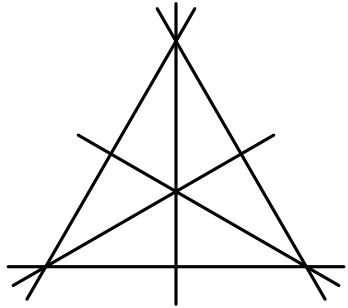
\begin{figure}[th]
\noindent \centering{}\noindent \begin{center}
\begin{tikzpicture}[font=\scriptsize]

\begin{scope}[line cap=round,xshift=0cm,yshift=0cm,rotate=0,scale=0.5]

 \foreach \x [count=\xi] in {B0,C0,A0}{
      \path (90+120*\xi:4) coordinate (\x);}

 \draw [very thick](A0)++(60:1)--++(240:8.928);
 \draw [very thick](B0)++(210:1)--++(30:8);

 \draw [very thick](A0)++(120:1)--++(-60:8.928);
 \draw [very thick](A0)++(90:1)--++(270:8);

 \draw [very thick](B0)++(180:1)--++(0:8.928);
 \draw [very thick](C0)++(-30:1)--++(150:8);
\end{scope}

\end{tikzpicture}
\par\end{center}\caption{\label{fig:SixLines}An arrangement of six lines.}
\end{figure}

In \cite{Oxl84}, it was implicitly observed that the conjecture holds
when $k=6$, but no further information was given about other $k$
values. After a long while, in 2006, Kingan and Lemos \cite{KL06}
proved the conjecture for regular matroids. However, regular matroids
are an extremely nice class of matroids, and the conjecture can be
a different problem for general matroids. This paper suggests a new
approach and shows that the conjecture holds for any CCI of size $k\le7$.\medskip{}

The following proposition leads to the concept of \emph{CCI-envelope}:
\begin{prop}[\cite{Oxl84}]
\label{prop:Oxley}Let $X$ be a CCI of a matroid $M$ with size
$k\ge4$. Then, $M$ has a minor $N$ with $\left|E(N)\right|=2k-2$
such that $r(N)=r(N^{\ast})=k-1$ and $X$ is a circuit and a cocircuit.
In particular, $E(N)\backslash X$ is independent in both $N$ and
$N^{\ast}$. 
\end{prop}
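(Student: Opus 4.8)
The plan is to fix a circuit $C$ and a cocircuit $C^{\ast}$ of $M$ with $X=C\cap C^{\ast}$, and to whittle $M$ down to the desired $N$ by a sequence of deletions and contractions, each chosen so that $X$ stays simultaneously a circuit and a cocircuit; the rank statements will then come for free. First I would record the two dual bookkeeping facts I will lean on: if $P\subsetneq C$ then $C\setminus P$ is a circuit of $M/P$ (it has nullity $1$ there, and every proper subset stays independent because $P$ sits inside the circuit $C$), and dually if $Q\subsetneq C^{\ast}$ then $C^{\ast}\setminus Q$ is a cocircuit of $M\setminus Q$.

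Since $X=C\cap C^{\ast}$, the sets $P:=C\setminus X$ and $Q:=C^{\ast}\setminus X$ are disjoint, so $M':=M/P\setminus Q=M\setminus Q/P$ is a well-defined minor on the ground set $X\cup R$, where $R:=E(M)\setminus(C\cup C^{\ast})$. The first fact gives that $X=C\setminus P$ is a circuit of $M/P$, and deleting $Q$ (which is disjoint from $X$) cannot destroy a circuit avoiding it, so $X$ is a circuit of $M'$; the dual fact together with the observation that contracting a set disjoint from $X$ preserves any cocircuit avoiding it shows $X$ is also a cocircuit of $M'$. Thus $X$ is at once a circuit and a cocircuit of $M'$.

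The core of the argument is the following reduction step, which I expect to be the main obstacle because it is exactly where matroid orthogonality enters. Suppose $N$ is a minor of $M$ in which $X$ is both a circuit and a cocircuit but with $r(N)>k-1$ or $r(N^{\ast})>k-1$. If $X$ fails to span, choose $y\in E(N)\setminus\mathrm{cl}_N(X)$; then $y\notin X$ and $y$ is a non-loop, and a short closure computation shows $r_{N/y}(D)=|D|$ for every $D\subsetneq X$ while $r_{N/y}(X)=k-1$, so $X$ remains a circuit of $N/y$, and since contraction by an element outside $X$ keeps every cocircuit avoiding it, $X$ remains a cocircuit of $N/y$. Dually, if $X$ fails to be cospanning, choose $y\in E(N)\setminus\mathrm{cl}_{N^{\ast}}(X)$ and delete it, preserving both properties. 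The delicate points are checking that the chosen $y$ is a non-loop outside $X$ (so contraction drops $r$ by exactly one while leaving the corank fixed) and its dual for deletion, so that contractions and deletions may be interleaved without interfering.

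Finally I would iterate this reduction starting from $M'$: each contraction lowers $r(N)$ by one and each deletion lowers $r(N^{\ast})$ by one, and because $X$ being a circuit forces $r(N)\ge k-1$ while $X$ being a cocircuit forces $r(N^{\ast})\ge k-1$, the process halts precisely when $r(N)=r(N^{\ast})=k-1$. At that point $|E(N)|=r(N)+r(N^{\ast})=2k-2$. The remaining claims are then automatic: from $r(N)=k-1$ the set $E(N)\setminus X$ has size $k-2=r(N)-1$ and, being the complement of the cocircuit $X$, is a hyperplane, hence independent in $N$; the dual argument gives independence in $N^{\ast}$.
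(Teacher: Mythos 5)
The paper offers no proof of this proposition at all---it is quoted from \cite{Oxl84} with only the citation---so there is no in-paper argument to compare against. Your proof is correct and is essentially the standard argument from Oxley's paper: contract $P=C\setminus X$ and delete $Q=C^{\ast}\setminus X$ (these are disjoint since $C\cap C^{\ast}=X$) to make $X$ simultaneously a circuit and a cocircuit, then repeatedly contract elements outside $\mathrm{cl}(X)$ and delete elements outside $\mathrm{cl}^{\ast}(X)$ until $X$ is spanning and cospanning, at which point $|E(N)|=r(N)+r(N^{\ast})=2k-2$; every step you flag as delicate (that $C\setminus P$ is a circuit of $M/P$, that deletion/contraction of sets disjoint from $X$ preserves circuits/cocircuits avoiding them, that the chosen $y$ is a non-loop so the rank bookkeeping works, and that $E(N)\setminus X$ is a $(k-2)$-element hyperplane and hence independent) checks out.
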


\begin{defn}
A \emph{CCI-envelope} is a matroid that contains a CCI of size $k$
and its ground set has size $2k-2$ for an integer $k\ge4$.
\end{defn}

Let $N$ be a CCI-envelope of a CCI with size $k\ge4$. Then, by Proposition
\ref{prop:Oxley}, $N$ has a minor $N'$ with $\left|E(N')\right|=2k-2=\left|E(N)\right|$,
and $N=N'$. So, a CCI-envelope is a matroid of the smallest size
that can contain a CCI, hence the name. Then, the conjecture is equivalent
to:
\begin{conjecture}
\label{conj:Oxley-1} Every CCI-envelope of a size-$k$ CCI has a
CCI of size $k-2$.
\end{conjecture}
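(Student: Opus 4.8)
The plan is to argue entirely inside a CCI-envelope $N$, treating Proposition~\ref{prop:Oxley} as a rigidity statement: $N$ has rank $k-1$ on a ground set of size $2k-2$, the size-$k$ set $X$ is at once a circuit and a cocircuit, and $Y:=E(N)\setminus X$ has size $k-2$ and is both independent and coindependent. Since $|Y|=k-2<k-1=r(N)$ while $X$ is spanning, I fix an element $x_0\in X$ lying outside $\mathrm{cl}(Y)$, so that $B:=Y\cup\{x_0\}$ is a basis of $N$ and $X\setminus\{x_0\}$ is a basis of $N^{\ast}$. I then read the whole configuration off the fundamental circuits $C(x,B)$ $(x\in X\setminus\{x_0\})$ and fundamental cocircuits $C^{\ast}(y,B)$ $(y\in Y)$, using the self-dual symmetry $N\leftrightarrow N^{\ast}$ to halve the bookkeeping.

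The first substantive step is to squeeze these fundamental objects with orthogonality, the principle that a circuit and a cocircuit never meet in exactly one element. From $C(x,B)\subseteq Y\cup\{x_0,x\}$ and the fact that $X$ is a cocircuit, $|C(x,B)\cap X|\neq1$ forces $x_0\in C(x,B)$, so every fundamental circuit has the shape $\{x_0,x\}\cup Y_x$ with $Y_x\subseteq Y$. Dually, $C^{\ast}(y,B)\subseteq\{y\}\cup(X\setminus\{x_0\})$, no $y\in Y$ is a coloop, and $X$ is a circuit, whence $C^{\ast}(y,B)=\{y\}\cup Z_y$ with $Z_y\subseteq X\setminus\{x_0\}$ and $|Z_y|\geq2$; moreover $C^{\ast}(x_0,B)$, being a cocircuit inside the cocircuit $X$, equals $X$. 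The standard duality $y\in C(x,B)\iff x\in C^{\ast}(y,B)$ says $y\in Y_x\iff x\in Z_y$, so all the data collapse to a single bipartite incidence graph $G$ on the parts $X\setminus\{x_0\}$ and $Y$ in which every vertex of $Y$ has degree at least two.

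With this in hand the goal is concrete: I want either a circuit meeting the cocircuit $X$ in exactly $k-2$ elements, or a cocircuit meeting the circuit $X$ in exactly $k-2$ elements (or, failing both, any circuit--cocircuit pair of intersection $k-2$). The fundamental objects already provide cheap overlaps --- $C(x,B)\cap X=\{x_0,x\}$ has size $2$, while $C^{\ast}(y,B)\cap X=Z_y$ has size between $2$ and $k-1$ --- so the case $k=4$ is immediate (here $2=k-2$) and, in general, any $y$ with $|Z_y|=k-2$ finishes the proof at once. The main construction is then an interpolation: starting from these readily available intersection sizes and applying (co)circuit elimination among $X$, the $C(x,B)$, and the $C^{\ast}(y,B)$, I push the overlap with $X$ up or down until it lands on exactly $k-2$, the degree-$\geq2$ condition on $G$ being what guarantees enough elements to eliminate.

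The hard part is making this interpolation land on $k-2$ exactly rather than overshooting, uniformly across all incidence graphs $G$ --- this is precisely the content that keeps Conjecture~\ref{conj:Oxley-1} open for general $k$. My plan for the stated range is to quantify the finiteness instead of fighting it: for $k\le7$ the parts of $G$ have sizes at most $6$ and $5$, and the constraints ($X$ a spanning circuit--cocircuit, $Y$ coindependent, each $|Z_y|\geq2$, total rank $k-1$) cut the admissible configurations down to an explicitly enumerable list. I then verify each by exhibiting a circuit--cocircuit pair of intersection $k-2$ by hand, expecting the delicate cases to be the highly symmetric self-dual envelopes --- the six-line configuration of Figure~\ref{fig:SixLines} is the prototype --- in which the obvious eliminations keep returning $X$ itself.
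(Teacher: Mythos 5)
Your opening moves are correct: with $B=Y\cup\{x_0\}$ a basis, orthogonality does force $C(x,B)\cap X=\{x_0,x\}$ and $|C^{\ast}(y,B)\cap X|\ge2$, the duality $y\in C(x,B)\iff x\in C^{\ast}(y,B)$ is standard, and the case $k=4$ indeed falls out immediately. But the argument has two genuine gaps, one of which is fatal. The first is the ``interpolation'' step: circuit elimination applied to $X$, the $C(x,B)$ and the $C^{\ast}(y,B)$ produces new (co)circuits but gives no control whatsoever over the size of their intersection with $X$; you concede yourself that making the overlap land on exactly $k-2$ is precisely the open content, so at that point nothing has been proved. The second, fatal gap is the fallback enumeration. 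The bipartite incidence graph $G$ of fundamental circuits determines the matroid only in the binary case; in general, many non-isomorphic matroids on $2k-2$ elements share the same $G$, and the circuits and cocircuits you would need in order to exhibit a CCI of size $k-2$ --- anything beyond the fundamental ones --- are simply not encoded in $G$. So ``verify each admissible $G$ by hand'' is not a well-defined finite task: for each $G$ you would have to treat every matroid on up to $12$ elements realizing it (a finite but astronomically large and uncatalogued collection; exhaustive matroid enumeration stops around $9$ elements), and your proposal supplies no structural lemma that handles all of them uniformly.

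That uniform structural tool is exactly what the paper builds, and it is where the two approaches genuinely diverge. The paper works with hyperplanes rather than circuits: a CCI is viewed as the complement of the union of a hyperplane and a cohyperplane, and for each $(k-3)$-subset $J\subseteq Y$ the hyperplanes $\overline{J\cup\{x\}}$, $x\in X$, partition $X$ (the hyperplane-partitions). Lemmas \ref{lem:tool-1}--\ref{lem:tool-4} constrain how any two such (co)hyperplane-partitions of an \emph{arbitrary} matroid can interact, so the final case analysis (Propositions \ref{prop:34vs34}, \ref{prop:all34} and Theorem \ref{thm:main}) runs over the handful of integer-partition types of $7$, namely $4^{1}3^{1}$, $3^{2}1^{1}$, $4^{1}1^{3}$, $3^{1}1^{4}$, $1^{7}$, rather than over matroids or incidence graphs. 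To repair your argument you would need an analogue: statements about $G$ (or about the fundamental (co)circuits) that hold for every matroid realizing it and that force a size-$(k-2)$ CCI; absent that, the finiteness you invoke is not actionable.
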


From now on, we call Conjecture \ref{conj:Oxley-1} the \emph{circuit-cocircuit
intersection conjecture}, or the \emph{CCI conjecture} for short.\medskip{}

Let us remind readers of a well-known fact: a circuit minus a flat
cannot have size $1$, and neither can a CCI, as it is expressed as
a circuit minus a hyperplane. Switching from the (co)circuit argument
to the (co)hyperplane argument is our starting point, and we utilize
the hierarchy structure of flats\@. So, we prefer viewing a CCI as
the complement of the union of a hyperplane and a cohyperplane.

\subsubsection*{Terminological note.}

Some authors use the term ``corank'' to mean dual rank\textemdash the
rank of the dual matroid. In this paper, the corank of $A\subseteq E(M)$
for a matroid $M$ refers to the number $r(M)-r(M|A)$. Its use makes
our arguments using the flat system of matroids more convenient.

\section{\label{sec:Proof}Proof of the Main Theorem}

\subsection{Definitions and lemmas}

Let $N$ be a CCI-envelope of a CCI $X$ with size $k\ge4$. Throughout
this section, we use the letter $Y$ to denote the subset $E(N)\backslash X$
and denote by $J_{1},\dots,J_{k-2}$ all size-$(k-3)$ subsets of
$Y$.

For a subset $J\subset Y$ of size $k-3$, consider the hyperplanes
$\overline{J\cup\left\{ x\right\} }$ for all $x\in X$, then their
intersections with $X$ partition $X$ into $\left\{ X_{1},\dots,X_{m}\right\} $
for some $m\ge2$, where $J\cup X_{i}$ with $i\in\left[m\right]=\left\{ 1,\dots,m\right\} $
are all those hyperplanes. We call this partition the \emph{hyperplane-partition
of $X$ in $N$ with respect to $J$} and denote it by $\mathcal{X}=\left\{ J;X_{1},\dots,X_{m}\right\} $.
With a fixed CCI $X$, we often refer to $\mathcal{X}$ as a hyperplane-partition
\emph{of $N$}. The integer partition $\sum_{i\in\left[m\right]}\left|X_{i}\right|$
of $k$ is called the \emph{type} of $\mathcal{X}$.

For a permutation $\sigma\in\mathfrak{S}_{m}$, we may write $\mathcal{X}=(J;X_{\sigma(1)},\dots,X_{\sigma(m)})$
and call it the hyperplane-partition of \emph{type} $\left(\left|X_{\sigma(1)}\right|,\dots,\left|X_{\sigma(m)}\right|\right)$.
When using this ordered notation, we assume that $\sigma$ is chosen
such that $\left|X_{\sigma(1)}\right|\le\cdots\le\left|X_{\sigma(m)}\right|$
unless otherwise specified.

For the dual matroid $N^{\ast}$, $Y$ is a hyperplane of $N^{\ast}$,
and the hyperplane-partition of $X$ in $N^{\ast}$ is defined, which
we call the\emph{ cohyperplane-partition of $X$ in $N$}.

Let $\left(J;X_{1},\dots,X_{m}\right)$ and $\left(J';X'_{1},\dots,X'_{m'}\right)$
be (co)hyperplane-partitions. Then, the nonempty sets among $\left\{ X_{i}\cap X'_{j}:i\in\left[m\right],j\in\left[m'\right]\right\} $
form a partition of $X$, which we call the partition \emph{induced}
by those (co)hyperplane-partitions.
\begin{lem}
\label{lem:tool-1}Suppose that $\mathcal{X}=(J;X_{1},X_{2})$ and
$\mathcal{X}'=(J';X_{1}',X_{2}',\dots,X_{m}')$ for $m\ge2$ are two
hyperplane-partitions of $N$. If $\left\{ x_{1}\right\} \subset X_{1}$
and $\left\{ x_{2}\right\} \subset X_{2}$ are two size-$1$ sets
of the induced partition, the union of which is not contained in an
$X_{j}'$, then $\left(J\cap J'\right)\cup\left\{ x_{1},x_{2}\right\} $
is a hyperplane, and $X\backslash\{x_{1},x_{2}\}$ is a CCI of size
$k-2$.
\end{lem}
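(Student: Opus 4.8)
The plan is to produce the hyperplane explicitly and then read the CCI off its complement. First I would record the sizes: since $|J|=|J'|=k-3$ while $J,J'\subseteq Y$ and $|Y|=|E(N)|-|X|=k-2$, each of $J,J'$ omits exactly one element of $Y$. If $J=J'$ the two equivalence relations defining $\mathcal{X}$ and $\mathcal{X}'$ coincide, so $\mathcal{X}'=\mathcal{X}$, the induced partition is $\{X_1,X_2\}$, and the singleton hypothesis forces $|X_1|=|X_2|=1$, i.e.\ $k=2$, contradicting $k\ge4$. Hence $J\ne J'$, and writing $Y\setminus J=\{a\}$, $Y\setminus J'=\{b\}$ with $a\ne b$ gives $J\cap J'=Y\setminus\{a,b\}$ of size $k-4$, together with $J=(J\cap J')\cup\{b\}$ and $J'=(J\cap J')\cup\{a\}$. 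I would fix the notation $H_1=\overline{J\cup\{x_1\}}$, $H_2=\overline{J\cup\{x_2\}}$ for the $\mathcal{X}$-hyperplanes through $x_1\in X_1$ and $x_2\in X_2$, and $H_1'=\overline{J'\cup\{x_1\}}$, $H_2'=\overline{J'\cup\{x_2\}}$ for the $\mathcal{X}'$-hyperplanes through $x_1$ and $x_2$; the hypothesis that $\{x_1,x_2\}$ lies in no single $X_j'$ means $x_1$ and $x_2$ sit in different $\mathcal{X}'$-parts, so $H_1'\ne H_2'$.

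Next I would build the candidate flat $F:=\overline{(J\cap J')\cup\{x_1,x_2\}}$. Because $J\cap J'\subseteq Y$ and $Y$ is a flat (the complement of the cocircuit $X$), the rank-$(k-4)$ flat $\overline{J\cap J'}$ lies inside $Y$ and so misses $X$; in particular $x_1,x_2\notin\overline{J\cap J'}$. Both $H_1$ and $H_1'$ contain $(J\cap J')\cup\{x_1\}$ (of rank $k-3$), and they are distinct hyperplanes—were they equal they would contain $J\cup J'\cup\{x_1\}=Y\cup\{x_1\}$ of rank $k-1$—so they meet in rank exactly $k-3$, giving $H_1\cap H_1'=\overline{(J\cap J')\cup\{x_1\}}$. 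Intersecting with $X$ yields $(H_1\cap X)\cap(H_1'\cap X)=X_1\cap X_{j_1}'=\{x_1\}$, the given singleton, so $x_2\notin\overline{(J\cap J')\cup\{x_1\}}$ and therefore $F$ has rank $k-2$, i.e.\ $F$ is a hyperplane. The symmetric computation gives $H_2\cap H_2'=\overline{(J\cap J')\cup\{x_2\}}$ meeting $X$ in $\{x_2\}$.

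The crux is to show that $F$ contains nothing beyond $(J\cap J')\cup\{x_1,x_2\}$, i.e.\ that this set is already closed; I expect this to be the main obstacle, since computing the rank is routine but excluding stray elements is exactly where both separations are used. For a hypothetical third point $x_3\in F\cap X$ I would invoke $\mathcal{X}$: if $x_3\in X_1$ then $\overline{(J\cap J')\cup\{x_1,x_3\}}\subseteq H_1$ already has rank $k-2$ and hence equals $F$, forcing $F=H_1$ and $x_2\in H_1\cap X=X_1$, impossible as $x_2\in X_2$; the case $x_3\in X_2$ is symmetric, and $X=X_1\cup X_2$ then rules out all of $X$, so $F\cap X=\{x_1,x_2\}$. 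The only possible extra elements of $Y$ are $a$ and $b$: if $a\in F$ then $F\supseteq(J\cap J')\cup\{a\}\cup\{x_1,x_2\}=J'\cup\{x_1,x_2\}$, yet $\overline{J'\cup\{x_1\}}=H_1'$ omits $x_2$ (here $H_1'\ne H_2'$ is used), so this set has rank $k-1>k-2=r(F)$, a contradiction; the case $b\in F$ is excluded the same way via $H_1$, using $x_2\notin X_1$.

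Finally I would conclude. The closure step shows $H:=(J\cap J')\cup\{x_1,x_2\}$ equals $F$, a flat of rank $k-2=r(N)-1$, which is a hyperplane and proves the first assertion. Its complement $E(N)\setminus H$ is then a cocircuit of $N$, and since $H\cap X=\{x_1,x_2\}$ we have $X\cap\bigl(E(N)\setminus H\bigr)=X\setminus\{x_1,x_2\}$. As $X$ is a circuit of the CCI-envelope $N$, this exhibits $X\setminus\{x_1,x_2\}$ as the intersection of a circuit and a cocircuit, hence a CCI, of size $k-2$.
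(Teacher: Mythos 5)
Your proof is correct and follows essentially the same route as the paper's: you form $A=(J\cap J')\cup\{x_1,x_2\}$, verify it has corank $1$ using the singleton cells of the induced partition, and then show it is closed by excluding extra elements of $X$ (via the hyperplanes $J\cup X_1$, $J\cup X_2$) and of $Y$ (via rank counts with $J$ and $J'$), exactly as in the paper. The only differences are organizational—you make the implicit hypothesis $J\neq J'$ explicit and identify $\overline{(J\cap J')\cup\{x_1\}}$ as $H_1\cap H_1'$ rather than as an intersection of the two hyperplanes written set-theoretically—but the underlying argument is the same.
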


\begin{proof}
Without loss of generality, let $\left\{ x_{1}\right\} =X_{1}\cap X_{1}'$
and $\left\{ x_{2}\right\} =X_{2}\cap X_{2}'$. It follows that $J\cap J'$
is a flat of corank $3$, $\left(J\cap J'\right)\cup\left\{ x_{l}\right\} $
for $l=1,2$ are flats of corank $2$, and $A:=\left(J\cap J'\right)\cup\left\{ x_{1},x_{2}\right\} $
is a subset of corank $1$. Because $\left\{ x_{1},x_{2}\right\} $
is not contained in $X_{1}$ nor $X_{2}$, we have $\overline{A}\cap Y\neq J$,
while the assumption that $\left\{ x_{1},x_{2}\right\} $ is not contained
in an $X_{j}'$ forces $\overline{A}\cap Y\neq J'$. Therefore $\overline{A}\cap Y=J\cap J'$.

Suppose $x\in\overline{A}$ for some $x\in X\backslash\{x_{1},x_{2}\}$,
then either $x\in X_{1}$ or $x\in X_{2}$. If $x\in X_{1}$, then
$\left(J\cap J'\right)\cup\left\{ x_{1},x\right\} $ has corank $1$,
and $\overline{\left(J\cap J'\right)\cup\left\{ x_{1},x\right\} }=\overline{A}$.
Moreover, $\overline{\left(J\cap J'\right)\cup\left\{ x_{1},x\right\} }=J\cup X_{1}$
since $J\cup X_{1}$, which contains $\left(J\cap J'\right)\cup\left\{ x_{1},x\right\} $,
is a hyperplane. So, we have $\overline{A}=J\cup X_{1}$, a contradiction.
Therefore $x\notin X_{1}$, and $x\notin X_{2}$ in the same way,
and $x\notin X=X_{1}\cup X_{2}$, a contradiction. This argument proves
$\overline{A}=A$ and that $X\backslash\{x_{1},x_{2}\}=E(N)\backslash(Y\cup A)$
is a CCI of size $k-2$.
\end{proof}
\begin{lem}
\label{lem:tool-2}Let $\mathcal{X}=\left(J;X_{1},X_{2}\right)$ and
$\mathcal{X}'=\left(J';X'_{1},X'_{2}\right)$ with $J\neq J'$ be
hyperplane-partitions of $N$, then $\left\{ X_{1},X_{2}\right\} \neq\left\{ X_{1}',X_{2}'\right\} $.
So, for the partition $\mathcal{Z}=\left\{ Z_{i}:i\in\left[m\right]\right\} $
of $X$ induced by $\mathcal{X}$ and $\mathcal{X}'$, one has $m\in\{3,4\}$.
If $Z_{i_{1}}$ and $Z_{i_{2}}$ are two sets of $\mathcal{Z}$ such
that $Z_{i_{1}}\cup Z_{i_{2}}$ is not contained in a set of $\mathcal{X}$
or $\mathcal{X}'$, then $(J\cap J')\cup Z_{i_{1}}\cup Z_{i_{2}}$
is a hyperplane. Moreover, when $m=3$, $\left|Z_{i}\right|>1$ for
all $i\in\left[3\right]$.
\end{lem}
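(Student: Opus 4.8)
The plan is to prove the four assertions in turn, leaning on two recurring facts: that $X$ spans $N$ (it is a circuit of size $k=r(N)+1$, so $r(X)=k-1=r(N)$), and that two distinct hyperplanes meet in a flat of corank $2$. Since $J$ and $J'$ are distinct size-$(k-3)$ subsets of the size-$(k-2)$ set $Y$, I first record that $J\cap J'$ is an independent set of size $k-4$, hence (by the same circuit-cocircuit orthogonality argument that shows $J$ is a flat) a flat of corank $3$, and that $J=(J\cap J')\cup\{b\}$, $J'=(J\cap J')\cup\{a\}$ where $\{a,b\}=Y\backslash(J\cap J')$. Each hyperplane $J\cup X_i$ meets $Y$ in exactly $J$, and each $J'\cup X'_j$ meets $Y$ in exactly $J'$.

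For the first assertion I argue by contradiction via submodularity. Assuming $\{X_1,X_2\}=\{X'_1,X'_2\}$, say $X_i=X'_i$, the sets $G_i:=(J\cup X_i)\cap(J'\cup X_i)=(J\cap J')\cup X_i$ are intersections of two distinct hyperplanes, hence flats of corank $2$. Their meet is $G_1\cap G_2=J\cap J'$ of corank $3$, while their join is $\overline{(J\cap J')\cup X}=E(N)$ because $X$ spans. Submodularity $r(G_1\vee G_2)+r(G_1\wedge G_2)\le r(G_1)+r(G_2)$ then reads $(k-1)+(k-4)\le(k-3)+(k-3)$, i.e.\ $2k-5\le 2k-6$, which is false. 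This proves $\{X_1,X_2\}\neq\{X'_1,X'_2\}$. The bound $m\in\{3,4\}$ follows at once: there are only four cells $X_i\cap X'_j$, so $m\le4$, while the common refinement of two \emph{distinct} partitions of $X$ into two blocks always has at least three nonempty blocks, so $m\ge3$.

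For the third assertion, the hypothesis that $Z_{i_1}\cup Z_{i_2}$ lies in no block of $\mathcal X$ or $\mathcal X'$ forces the two cells to be diagonal, so after relabeling the blocks I may take $Z_{i_1}=Z_{11}=X_1\cap X'_1$ and $Z_{i_2}=Z_{22}=X_2\cap X'_2$. As in the first assertion, $F_1:=(J\cup X_1)\cap(J'\cup X'_1)=(J\cap J')\cup Z_{11}$ and $F_2:=(J\cup X_2)\cap(J'\cup X'_2)=(J\cap J')\cup Z_{22}$ are flats of corank $2$ meeting in $J\cap J'$; a short rank count (submodularity for the upper bound, and the disjointness of the nonempty cell $Z_{22}$ from the flat $F_1$ for the lower) shows $A:=(J\cap J')\cup Z_{11}\cup Z_{22}$ has corank $1$. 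The real work is to show $A$ is closed, which I do by mirroring the proof of Lemma \ref{lem:tool-1}. First, $a\notin\overline A$: otherwise $\overline A\supseteq\overline{J'\cup Z_{11}}=J'\cup X'_1$, and adjoining the disjoint nonempty cell $Z_{22}$ would push the rank to $k-1$; symmetrically $b\notin\overline A$, so $\overline A\cap Y=J\cap J'$. The same mechanism rules out any $x\in X\backslash(Z_{11}\cup Z_{22})$: such an $x$ lies in exactly one block $X_i$ but not in the matching primed block, so adjoining it to $F_i$ closes up to the full hyperplane $J\cup X_i$ and forces an element of $\{a,b\}$ into $\overline A$. Hence $\overline A=A$ is a hyperplane.

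The fourth assertion is then a short consequence. When $m=3$ exactly one cell is empty; relabeling, say $Z_{21}=\emptyset$, so $X_2=Z_{22}$, $X'_1=Z_{11}$ and $Z_{12}=X_1\backslash X'_1$. I realize each of the three cells as the intersection of the circuit $X$ with a cocircuit, namely the complement of a hyperplane: $Z_{22}=X\cap(E(N)\backslash(J\cup X_1))$, $Z_{11}=X\cap(E(N)\backslash(J'\cup X'_2))$, and---this is where the third assertion is needed---$Z_{12}=X\cap(E(N)\backslash A)$ for the hyperplane $A=(J\cap J')\cup Z_{11}\cup Z_{22}$ just produced. Each $Z_i$ is therefore a nonempty CCI, and since a CCI can never have size $1$, every $|Z_i|\ge2$. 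I expect the main obstacle to be the third assertion, specifically verifying that $A$ is closed; once that hyperplane is in hand, the size bound on the ``middle'' cell $Z_{12}$, which is invisible to the plain complements of $J\cup X_i$ and $J'\cup X'_j$, comes for free.
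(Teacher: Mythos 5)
Your proof is correct. For the first assertion and for the hyperplane assertion you follow essentially the paper's route: the paper likewise derives the contradiction for $\left\{ X_{1},X_{2}\right\} =\left\{ X_{1}',X_{2}'\right\} $ from the two corank-$2$ flats $(J\cap J')\cup X_{i}$ (phrased as ``$(J\cap J')\cup X_{1}\cup X_{2}$ is contained in a hyperplane'' rather than as an explicit submodular inequality), and it obtains the hyperplane $(J\cap J')\cup Z_{i_{1}}\cup Z_{i_{2}}$ by exactly your two steps---rank $k-2$ plus closedness---except that the closedness check is compressed into the single clause ``adding an element \dots{} increases the rank''; your Lemma \ref{lem:tool-1}-style closure argument is the honest content of that clause (and it also quietly corrects the paper's slip of writing ``rank $k-1$'' where rank $k-2$ is meant). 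Where you genuinely diverge is the final claim that $\left|Z_{i}\right|>1$ when $m=3$: the paper argues that if $\left|Z_{i}\right|=1$ then $(J\cap J')\cup(X\backslash Z_{i})$ is contained in a hyperplane while $X\backslash Z_{i}$, a $(k-1)$-subset of the circuit $X$, already has rank $k-1$; you instead realize each of the three cells as a CCI---the two corner cells as $X$ minus one of the four defining hyperplanes, the middle cell as $X$ minus the newly constructed hyperplane $A$---and invoke the fact that no CCI has size $1$. Both work; yours is more conceptual and makes explicit that the cells of the induced partition are themselves CCIs, at the cost of needing the third assertion even to handle the corner cells, while the paper's is a uniform one-line rank count. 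One small polish: the cleanest reason that $J\cap J'$ is a flat of corank $3$ is that it is independent of size $k-4$ and is the intersection of the flats $J$ and $J'$, each of which is itself an intersection of two of the defining hyperplanes; the appeal to ``circuit--cocircuit orthogonality'' is not really what does the work there.
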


\begin{proof}
Suppose $\left\{ X_{1},X_{2}\right\} =\left\{ X_{1}',X_{2}'\right\} $,
and let $X_{1}=X_{1}'$ and $X_{2}=X_{2}'$ without loss of generality.
Then, $(J\cap J')\cup X_{1}=(J\cup X_{1})\cap(J'\cup X_{1})$ is a
flat of corank $2$, and so is $(J\cap J')\cup X_{2}$. Therefore
$(J\cap J')\cup X_{1}\cup X_{2}$ is contained in a hyperplane, which
is impossible because $r_{N}(X_{1}\cup X_{2})=r_{N}(X)=k-1$. Thus,
$\left\{ X_{1},X_{2}\right\} \neq\left\{ X_{1}',X_{2}'\right\} $.

For the partition $\left\{ Z_{i}:i\in\left[m\right]\right\} $, it
follows $m\in\{3,4\}$. Let $m=3$. If $\left|Z_{1}\right|=1$, then
$(J\cap J')\cup Z_{2}\cup Z_{3}$ is contained in a hyperplane, which
is also impossible because $r_{N}(Z_{2}\cup Z_{3})=k-1$. Thus, $\left|Z_{1}\right|>1$.
In the same way, $\left|Z_{2}\right|>1$ and $\left|Z_{3}\right|>1$.

Now, $(J\cap J')\cup Z_{i_{1}}\cup Z_{i_{2}}$ has rank $k-1$, and
adding an element from $X\backslash(Z_{i_{1}}\cup Z_{i_{2}})$ or
$Y\backslash(J\cap J')$ to it increases the rank since $Z_{i_{1}}\cup Z_{i_{2}}$
is not contained in a set of $\mathcal{X}$ or $\mathcal{X}'$. Thus,
$(J\cap J')\cup Z_{i_{1}}\cup Z_{i_{2}}$ is a hyperplane.
\end{proof}
\begin{lem}
\label{lem:1and1,1}Let $(J;\{x\},\dots)$ and $(J^{\ast};\{x^{\ast}\},\dots)$
with $J\neq J^{\ast}$ be a hyperplane-partition and a cohyperplane-partition
of $N$, respectively. If $x\neq x^{\ast}$ or one of the partitions
has type $(1,1,\dots)$, then $N$ satisfies the CCI conjecture.
\end{lem}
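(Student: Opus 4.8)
The plan is to produce the size-$(k-2)$ CCI directly, using the viewpoint recorded in the introduction that a CCI is the complement $E(N)\setminus(H\cup H^{\ast})$ of a hyperplane $H$ and a cohyperplane $H^{\ast}$. The engine of the argument is a counting observation about $Y$. Since $J$ and $J^{\ast}$ are two \emph{distinct} size-$(k-3)$ subsets of $Y$, and $Y$ itself has size $k-2$, their complements in $Y$ are distinct singletons; hence $J\cup J^{\ast}=Y$ (and $\left|J\cap J^{\ast}\right|=k-4$). This is the identity that makes the two partitions interact.

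First I would record the general principle that for any hyperplane $H$ and cohyperplane $H^{\ast}$ of $N$, the set $E(N)\setminus(H\cup H^{\ast})$ is a CCI whenever it is nonempty: $E(N)\setminus H$ is a cocircuit, $E(N)\setminus H^{\ast}$ is a circuit, and their intersection is exactly $E(N)\setminus(H\cup H^{\ast})$. I would then note that the blocks of the given partitions supply explicit hyperplanes and cohyperplanes: because $\{x\}$ is a block of the hyperplane-partition, $J\cup\{x\}$ is a hyperplane of $N$, and because $\{x^{\ast}\}$ is a block of the cohyperplane-partition, $J^{\ast}\cup\{x^{\ast}\}$ is a cohyperplane of $N$.

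For the case $x\neq x^{\ast}$, I would take $H=J\cup\{x\}$ and $H^{\ast}=J^{\ast}\cup\{x^{\ast}\}$. Then $H\cup H^{\ast}=(J\cup J^{\ast})\cup\{x,x^{\ast}\}=Y\cup\{x,x^{\ast}\}$, so $E(N)\setminus(H\cup H^{\ast})=X\setminus\{x,x^{\ast}\}$ has size $k-2$ and is the desired CCI. For the case $x=x^{\ast}$, I would invoke the type-$(1,1,\dots)$ hypothesis to locate a second element to delete. If the hyperplane-partition has type $(1,1,\dots)$, there is a singleton block $\{x'\}$ with $x'\neq x$; taking $H=J\cup\{x'\}$ and $H^{\ast}=J^{\ast}\cup\{x^{\ast}\}$ yields $E(N)\setminus(H\cup H^{\ast})=X\setminus\{x',x\}$, again a CCI of size $k-2$. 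The subcase where instead the cohyperplane-partition has type $(1,1,\dots)$ is handled dually, pairing $H=J\cup\{x\}$ with $H^{\ast}=J^{\ast}\cup\{w^{\ast}\}$ for a second singleton cohyperplane block $\{w^{\ast}\}$ with $w^{\ast}\neq x^{\ast}$.

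I do not expect a serious obstacle once the identity $J\cup J^{\ast}=Y$ is in hand; the two points requiring care are (i) checking that the two elements deleted from $X$ are genuinely distinct in every case, which is precisely what the hypothesis ``$x\neq x^{\ast}$ or type $(1,1,\dots)$'' guarantees, and (ii) stating the hyperplane/cohyperplane-to-CCI principle cleanly. The essential mechanism is that distinctness of $J$ and $J^{\ast}$ forces their union to exhaust $Y$, which collapses $E(N)\setminus(H\cup H^{\ast})$ onto a subset of $X$ and reduces the whole matter to bookkeeping of which two elements of $X$ survive.
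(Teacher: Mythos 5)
Your proposal is correct and follows essentially the same route as the paper's own proof: in both cases one takes the hyperplane $J\cup\{x\}$ (or $J\cup\{x'\}$) and the cohyperplane $J^{\ast}\cup\{x^{\ast}\}$ (or $J^{\ast}\cup\{x^{\ast\ast}\}$) and observes that the complement of their union is a size-$(k-2)$ subset of $X$, hence a CCI. The paper states this more tersely; your explicit bookkeeping via $J\cup J^{\ast}=Y$ and the hyperplane/cohyperplane-to-CCI principle merely fills in details the paper leaves implicit.
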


\begin{proof}
If $x\neq x^{\ast}$, then $J\cup\{x\}$ and $J^{\ast}\cup\{x^{\ast}\}$
are a hyperplane and a cohyperplane, respectively, and $X\backslash\{x,x^{\ast}\}$
is a CCI of size $k-2$. If $x=x^{\ast}$, then one of the given partitions
has type $(1,1,\dots)$, and suppose $(J^{\ast};\{x^{\ast}\},\dots)$
has type $(1,1,\dots)$ without loss of generality. Then, there exists
a size-$1$ set $\{x^{\ast\ast}\}$ of $(J^{\ast};\{x^{\ast}\},\dots)$
with $x^{\ast\ast}\neq x^{\ast}$, and $X\backslash\{x,x^{\ast\ast}\}$
is a CCI of size $k-2$.
\end{proof}
\begin{lem}
\label{lem:tool-4}Let $\left(J;X_{1},X_{2}\right)$ be a hyperplane-partition.
If $\left(J^{\ast};X_{0}^{\ast},\dots\right)$ with $J^{\ast}\neq J$
is a cohyperplane-partition, then $\left|X_{1}\cap X_{0}^{\ast}\right|\neq\left|X_{1}\right|-1$
and $\left|X_{2}\cap X_{0}^{\ast}\right|\neq\left|X_{2}\right|-1$.
In particular, if $\left(J;X_{1},X_{2}\right)$ has  type $\left(2,k-2\right)$
or $\left(3,k-3\right)$, and $\left(J^{\ast};X_{1}^{\ast},X_{2}^{\ast}\right)$
is a cohyperplane-partition, then $X_{1}\subseteq X_{1}^{\ast}$ or
$X_{1}\subseteq X_{2}^{\ast}$.
\end{lem}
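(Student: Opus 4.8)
The plan is to reduce the two inequalities to the classical fact, recalled in the introduction, that \emph{a circuit minus a flat cannot have size $1$}. Since $X_0^{\ast}$ is a block of a partition of $X$, the assertion $\left|X_1\cap X_0^{\ast}\right|\neq\left|X_1\right|-1$ is equivalent to $\left|X_1\backslash X_0^{\ast}\right|\neq1$, and similarly for $X_2$. So it suffices to show that neither $X_1\backslash X_0^{\ast}$ nor $X_2\backslash X_0^{\ast}$ is a singleton.

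The key step is to pass to the dual. Write $J=Y\backslash\{y\}$ and $J^{\ast}=Y\backslash\{y^{\ast}\}$; the hypothesis $J^{\ast}\neq J$ gives $y\neq y^{\ast}$. Since $\left(J^{\ast};X_0^{\ast},\dots\right)$ is a cohyperplane-partition, $J^{\ast}\cup X_0^{\ast}$ is a hyperplane of $N^{\ast}$, so its complement
\[
D:=E(N)\backslash(J^{\ast}\cup X_0^{\ast})=\{y^{\ast}\}\cup(X\backslash X_0^{\ast})
\]
is a cocircuit of $N^{\ast}$, that is, a circuit of $N$. On the primal side, $H_1:=J\cup X_1$ and $H_2:=J\cup X_2$ are hyperplanes, hence flats, of $N$, with $E(N)\backslash H_2=\{y\}\cup X_1$ and $E(N)\backslash H_1=\{y\}\cup X_2$.

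Next I would intersect the circuit $D$ with the complement of the flat $H_2$. Because $y\neq y^{\ast}$, the element $y$ lies in $J^{\ast}$ and hence not in $D$, so
\[
D\cap\left(E(N)\backslash H_2\right)=D\cap\left(\{y\}\cup X_1\right)=X_1\backslash X_0^{\ast}.
\]
As $D$ is a circuit and $H_2$ a flat, $D\backslash H_2=X_1\backslash X_0^{\ast}$ cannot be a singleton, which is precisely $\left|X_1\cap X_0^{\ast}\right|\neq\left|X_1\right|-1$. Running the same computation with $H_1$ in place of $H_2$ yields $D\cap\left(E(N)\backslash H_1\right)=X_2\backslash X_0^{\ast}$, hence $\left|X_2\cap X_0^{\ast}\right|\neq\left|X_2\right|-1$. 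I expect the only delicate point to be the bookkeeping that keeps the $Y$-coordinate $y$ of $H_i$ out of $D$: this is exactly where $J^{\ast}\neq J$ enters, for if $y=y^{\ast}$ then $y\in D$ and the intersection would instead be $\{y\}\cup(X_1\backslash X_0^{\ast})$, destroying the singleton obstruction.

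For the ``in particular'' claim I would apply the inequality just established to both blocks $X_1^{\ast}$ and $X_2^{\ast}$, using that they partition $X$ so that $\left|X_1\cap X_1^{\ast}\right|+\left|X_1\cap X_2^{\ast}\right|=\left|X_1\right|$. When $\left|X_1\right|=2$, the inequalities $\left|X_1\cap X_i^{\ast}\right|\neq1$ exclude the split $1+1$ and leave $X_1\subseteq X_1^{\ast}$ or $X_1\subseteq X_2^{\ast}$. When $\left|X_1\right|=3$, the inequalities $\left|X_1\cap X_1^{\ast}\right|\neq2$ and $\left|X_1\cap X_2^{\ast}\right|\neq2$ exclude both $1+2$ and $2+1$, again forcing $X_1\subseteq X_1^{\ast}$ or $X_1\subseteq X_2^{\ast}$.
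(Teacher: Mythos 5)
Your proof is correct and is essentially the paper's own argument in complementary language: the set $X_1\backslash X_0^{\ast}$ you exhibit as ``circuit minus flat'' is exactly the complement of the set $\left(E(N)\backslash X\right)\cup X_0^{\ast}\cup X_2$ that the paper shows is a union of a hyperplane and a cohyperplane of size $2k-3$, and the singleton obstruction is the same fact both times. The ``in particular'' part also matches the paper's counting argument.
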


\begin{proof}
If $\left|X_{0}^{\ast}\cap X_{1}\right|=\left|X_{1}\right|-1$, then
$\left|X_{0}^{\ast}\cup X_{2}\right|=\left|X\right|-1$, and $\left(E(N)\backslash X\right)\cup X_{0}^{\ast}\cup X_{2}$
is a union of a hyperplane and a cohyperplane with size $2k-3$, a
contradiction. So, $\left|X_{0}^{\ast}\cap X_{1}\right|\neq\left|X_{1}\right|-1$,
and similarly, $\left|X_{0}^{\ast}\cap X_{2}\right|\neq\left|X_{2}\right|-1$.

In particular, suppose that $\left(J;X_{1},X_{2}\right)$ has  type
$\left(2,k-2\right)$ or $\left(3,k-3\right)$ and that $\left(J^{\ast};X_{1}^{\ast},X_{2}^{\ast}\right)$
is a cohyperplane-partition. If $X_{1}\nsubseteq X_{1}^{\ast}$ and
$X_{1}\nsubseteq X_{2}^{\ast}$, then $X_{1}^{\ast}\cap X_{1}\neq\emptyset\neq X_{2}^{\ast}\cap X_{1}$,
and either $\left|X_{1}^{\ast}\cap X_{1}\right|=1$ or $\left|X_{2}^{\ast}\cap X_{1}\right|=1$
since $\left|X_{1}\right|=2$ or $3$; hence, either $\left|X_{2}^{\ast}\cap X_{1}\right|=\left|X_{1}\right|-1$
or $\left|X_{1}^{\ast}\cap X_{1}\right|=\left|X_{1}\right|-1$, contrary
to the above argument. Thus we conclude $X_{1}\subseteq X_{1}^{\ast}$
or $X_{1}\subseteq X_{2}^{\ast}$.
\end{proof}

\subsection{Proof of the main theorem}

We prove the CCI conjecture for $k=7$, and the remaining cases $4\le k\le6$
can be handled similarly. Let $N$ be a CCI-envelope of a CCI $X$
with size $7$. If $N$ has a (co)hyperplane-partition $\left(J;-\right)$
with a size-$2$ set $X_{0}$, then $X\backslash X_{0}=E(N)\backslash(Y\cup(J\cup X_{0}))$
is a CCI of size $5$, where $Y=E(N)\backslash X$ is a hyperplane
and a cohyperplane. So, throughout this subsection, for the sake of
convenience, we may assume that no set of a (co)hyperplane-partition
has size $2$. Then, those integer partitions of $7$ that (co)hyperplane-partitions
can induce are:
\[
4^{1}3^{1},\,3^{2}1^{1},\,4^{1}1^{3},\,3^{1}1^{4},\,\text{ and }\,1^{7}.
\]
For $i\in[5]$, let $\mathcal{X}_{i}=(J_{i};-)$ and $\mathcal{X}_{i}^{\ast}=(J_{i};-)$
be hyperplane- and cohyperplane-partitions, respectively.
\begin{prop}
\label{prop:34vs34}Let $\mathcal{X}_{1}$ be a type-$4^{1}3^{1}$
hyperplane-partition of $N$. If some $\mathcal{X}_{i}^{\ast}$ for
$i\in[5]\backslash\{1\}$ has the same type, then $\mathcal{X}_{1}$
and $\mathcal{X}_{i}^{\ast}$ are the same as partitions of $X$.
\end{prop}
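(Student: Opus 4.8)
The plan is to exploit the fact that, for $k=7$, the type $4^{1}3^{1}$ is exactly the type $(3,k-3)$ that appears in Lemma \ref{lem:tool-4}, so that lemma does almost all of the work. Write the hyperplane-partition as $\mathcal{X}_{1}=(J_{1};X_{1},X_{2})$ with $\left|X_{1}\right|=3$ and $\left|X_{2}\right|=4$, and write the type-$4^{1}3^{1}$ cohyperplane-partition as $\mathcal{X}_{i}^{\ast}=(J_{i};X_{1}^{\ast},X_{2}^{\ast})$ with $\left|X_{1}^{\ast}\right|=3$ and $\left|X_{2}^{\ast}\right|=4$, keeping in mind that $J_{i}\neq J_{1}$ since $i\neq1$. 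Because each is a two-block partition of $X$ with one size-$3$ and one size-$4$ block, the conclusion $\{X_{1},X_{2}\}=\{X_{1}^{\ast},X_{2}^{\ast}\}$ will follow as soon as their size-$3$ blocks coincide; so it suffices to prove $X_{1}=X_{1}^{\ast}$.

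First I would invoke the ``in particular'' clause of Lemma \ref{lem:tool-4}, applied to the hyperplane-partition $\mathcal{X}_{1}$ (which has type $(3,k-3)$) together with the cohyperplane-partition $\mathcal{X}_{i}^{\ast}$: this places the smaller block $X_{1}$ inside a single block of $\mathcal{X}_{i}^{\ast}$, i.e.\ $X_{1}\subseteq X_{1}^{\ast}$ or $X_{1}\subseteq X_{2}^{\ast}$. In the first alternative both sets have size $3$, so $X_{1}=X_{1}^{\ast}$ and we are finished immediately. Thus the entire substance of the proposition reduces to excluding the second alternative, in which a size-$3$ set would sit as a proper subset of the size-$4$ block $X_{2}^{\ast}$.

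The hard part — and the reason both halves of Lemma \ref{lem:tool-4} are needed — is ruling out this proper containment, which I would do by passing to complements. If $X_{1}\subseteq X_{2}^{\ast}$, then $X_{1}^{\ast}=X\backslash X_{2}^{\ast}\subseteq X\backslash X_{1}=X_{2}$, and since $\left|X_{1}^{\ast}\right|=3=\left|X_{2}\right|-1$ this forces $\left|X_{2}\cap X_{1}^{\ast}\right|=\left|X_{2}\right|-1$. But the first (inequality) assertion of Lemma \ref{lem:tool-4}, applied with the cohyperplane block $X_{0}^{\ast}=X_{1}^{\ast}$, asserts precisely that $\left|X_{2}\cap X_{0}^{\ast}\right|\neq\left|X_{2}\right|-1$, a contradiction. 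Hence only $X_{1}\subseteq X_{1}^{\ast}$ can occur.

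I expect no rank computation beyond the two clauses of Lemma \ref{lem:tool-4}; the only genuine care required is the size bookkeeping, namely verifying that the forbidden case is exactly the one triggering the off-by-one count $\left|X_{2}\right|-1$. Combining the two steps gives $X_{1}=X_{1}^{\ast}$, whence $X_{2}=X\backslash X_{1}=X\backslash X_{1}^{\ast}=X_{2}^{\ast}$, so $\mathcal{X}_{1}$ and $\mathcal{X}_{i}^{\ast}$ are the same partition of $X$.
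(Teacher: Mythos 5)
Your proof is correct. You and the paper both begin the same way, invoking the ``in particular'' clause of Lemma \ref{lem:tool-4} to get the dichotomy $X_{1}\subseteq X_{1}^{\ast}$ or $X_{1}\subseteq X_{2}^{\ast}$ and disposing of the first alternative by equal cardinality. Where you diverge is in killing the second alternative: the paper observes that $X_{1}\subset X_{2}^{\ast}$ (together with the dual containment $X_{1}^{\ast}\subset X_{2}$) forces the induced partition to have exactly three blocks, one of which is the singleton $X_{2}\cap X_{2}^{\ast}$, contradicting the $m=3$ clause of Lemma \ref{lem:tool-2}; you instead note that $X_{1}^{\ast}\subseteq X_{2}$ with $\left|X_{1}^{\ast}\right|=3=\left|X_{2}\right|-1$ violates the first (inequality) clause of Lemma \ref{lem:tool-4} applied with $X_{0}^{\ast}=X_{1}^{\ast}$. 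Your application is legitimate ($J_{i}\neq J_{1}$ holds, and $X_{0}^{\ast}$ may be any block of the cohyperplane-partition), and it is arguably slightly leaner, since it keeps the whole argument inside Lemma \ref{lem:tool-4} and avoids the induced-partition bookkeeping; the underlying mechanisms differ, though, as the paper's exclusion rests on two corank-$2$ flats being unable to span, while yours rests on the impossibility of a size-$1$ CCI-complement built into the first clause of Lemma \ref{lem:tool-4}. Both are complete.
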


\begin{proof}
Let $\mathcal{X}_{1}=(J_{1};X_{1},X_{1}')$ be the given hyperplane-partition
of type $(3,4)$, and suppose $\mathcal{X}_{i}^{\ast}=(J_{i};X_{i},X_{i}')$
for some $i\in[5]\backslash\{1\}$ has the same type. Then, either
$X_{1}=X_{i}$ or $X_{1}\subset X_{i}'$ and $X_{i}\subset X_{1}'$
by Lemma \ref{lem:tool-4} since $\left|X_{1}\right|=\left|X_{i}\right|=3$.
But, if $X_{1}\subset X_{i}'$, then $\left\{ X_{1},X_{i},X_{i}'\backslash X_{1}\right\} $
with $\left|X_{i}'\backslash X_{1}\right|=1$ is the partition induced
by $\mathcal{X}_{1}$ and $\mathcal{X}_{i}^{\ast}$, contrary to Lemma
\ref{lem:tool-2}. Thus $X_{1}=X_{i}$, and $\left\{ X_{1},X_{1}'\right\} =\left\{ X_{i},X_{i}'\right\} $.
\end{proof}
Thus, we have two cases for the hyperplane- and cohyperplane-partitions
of $N$:
\begin{enumerate}
\item All hyperplane-partitions or all cohyperplane-partitions have type
$4^{1}3^{1}$. 
\item There is a two-element subset $\{i,j\}\subset[5]$ such that for any
$l\in[5]\backslash\{i,j\}$, both $\mathcal{X}_{l}$ and $\mathcal{X}_{l}^{\ast}$
have types other than $4^{1}3^{1}$, and hence their types contain
a summand of $1$.
\end{enumerate}
First, we resolve case (1) (Proposition \ref{prop:all34}) and then
proceed to settle case (2), and hence obtain the main theorem (Theorem
\ref{thm:main}).
\begin{prop}
\label{prop:all34}If either all hyperplane-partitions or all cohyperplane-partitions
have type $4^{1}3^{1}$, then $N$ satisfies the CCI conjecture.
\end{prop}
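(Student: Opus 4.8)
The plan is to work inside the self-dual framework the paper favours: for any hyperplane $H$ and any cohyperplane $H^{\ast}$ of $N$, the set $E(N)\backslash(H\cup H^{\ast})$ is the intersection of the circuit $E(N)\backslash H^{\ast}$ with the cocircuit $E(N)\backslash H$, hence is a CCI of size $|E(N)|-|H\cup H^{\ast}|$. Thus to produce a CCI of size $5=k-2$ it suffices to find a hyperplane $H$ and a cohyperplane $H^{\ast}$ with $|H\cup H^{\ast}|=7$. Passing to $N^{\ast}$ if needed, I assume every hyperplane-partition has type $4^{1}3^{1}$ and write $\mathcal{X}_{i}=(J_{i};A_{i},B_{i})$ with $|A_{i}|=3$; then $J_{i}\cup A_{i}$ is a hyperplane whose complementary cocircuit is $\{y_{i}\}\cup B_{i}$ of size $5$, where $\{y_{i}\}=Y\backslash J_{i}$. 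So the target reduces to finding, for a single index $i$, a cohyperplane contained in $J_{i}\cup A_{i}$.

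First I would compare two hyperplane-partitions. For $i\ne j$ the induced partition of $X$ is controlled by $t:=|A_{i}\cap A_{j}|$; Lemma \ref{lem:tool-2} excludes $t=3$ (equal partitions) and $t=0$ (a singleton block inside a three-block partition), leaving $t\in\{1,2\}$. If some pair has $t=2$, then $A_{i}\cap B_{j}$ and $B_{i}\cap A_{j}$ are singletons lying in the two different blocks of $\mathcal{X}_{i}$, and their union is split by $\mathcal{X}_{j}$, so Lemma \ref{lem:tool-1} hands back a CCI of size $5$ at once.

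The main obstacle is the remaining configuration, in which $|A_{i}\cap A_{j}|=1$ for every pair. A degree count on the five triples $A_{1},\dots,A_{5}$ in the $7$-point set $X$ pins them down uniquely up to relabelling: four of the triples are the vertex-stars of a copy of $K_{4}$ whose six edges are identified with six of the points of $X$, while the fifth triple $A_{i_{0}}$ is a perfect matching of $K_{4}$ together with the one leftover point. Fixing this distinguished index $i_{0}$, I would feed each hyperplane-partition $\mathcal{X}_{j}$ with $j\ne i_{0}$ together with the cohyperplane-partition $\mathcal{X}_{i_{0}}^{\ast}$ into Lemma \ref{lem:tool-4}; this is legitimate because $J_{j}\ne J_{i_{0}}$, and it shows that every block $S$ of $\mathcal{X}_{i_{0}}^{\ast}$ satisfies $|S\cap A_{j}|\ne2$ and $|S\cap B_{j}|\ne3$.

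Reading these inequalities in the $K_{4}$-picture, each block meets every edge-star in $0$, $1$, or $3$ of its edges. The delicate checks I expect to make are that this excludes all blocks of size $\ge4$ (a $4$-set either meets some star in exactly $2$, or, as a star together with the extra point, violates $|S\cap B_{j}|\ne3$) and that no two blocks of size $3$ are disjoint (every admissible triple is a star or a matching-plus-point, and any two of these intersect). Hence $\mathcal{X}_{i_{0}}^{\ast}$ can only have type $3^{1}1^{4}$ or $1^{7}$, and in either type some block $S$ lies inside $A_{i_{0}}$. Then $J_{i_{0}}\cup S$ is a cohyperplane contained in the hyperplane $J_{i_{0}}\cup A_{i_{0}}$, so $E(N)\backslash(J_{i_{0}}\cup A_{i_{0}})=\{y_{i_{0}}\}\cup B_{i_{0}}$ is a CCI of size $5$, completing the argument.
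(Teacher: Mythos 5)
Your proposal is correct, but its core runs along a genuinely different track from the paper's. The shared part is the opening: the reduction to finding a hyperplane and a cohyperplane whose union has size $7$, the dualization to assume all hyperplane-partitions have type $4^{1}3^{1}$, and the disposal of any pair with $\left|A_{i}\cap A_{j}\right|=2$ via Lemma \ref{lem:tool-1} (the paper does exactly this with $X\backslash(X_{i}\triangle X_{j})$). From there the paper goes the other way around: it first forces every cohyperplane-partition to have type $3^{2}1^{1}$ (using Proposition \ref{prop:34vs34} to exclude $4^{1}3^{1}$ and a direct argument for the types with a size-$4$ or only-singleton blocks), and then, for the singleton $\{x_{0}\}$ of such a partition, applies Lemma \ref{lem:tool-2} to two hyperplane-partitions whose triples avoid $x_{0}$ to manufacture a size-$3$ hyperplane block $X\backslash(X_{i}\triangle X_{j})$ containing $x_{0}$. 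You instead classify the five triples outright\textemdash the degree sequence $(3,3,2,2,2,2,1)$ does force the $K_{4}$-stars-plus-matching picture, and I checked that your two ``delicate checks'' (no admissible block of size $\ge4$, no two disjoint admissible triples) go through under the constraints $\left|S\cap A_{j}\right|\neq2$, $\left|S\cap B_{j}\right|\neq3$ from Lemma \ref{lem:tool-4}\textemdash and then extract a block of $\mathcal{X}_{i_{0}}^{\ast}$ inside $A_{i_{0}}$. Your route never needs Proposition \ref{prop:34vs34} nor the hyperplane-constructing half of Lemma \ref{lem:tool-2}, at the price of the explicit combinatorial classification; the paper's route avoids that classification but needs the full type analysis of the cohyperplane side. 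One small point to make explicit when you write this up: your conclusion that $\mathcal{X}_{i_{0}}^{\ast}$ has type $3^{1}1^{4}$ or $1^{7}$ silently invokes the paper's standing reduction that no block of a (co)hyperplane-partition has size $2$; that is legitimate here, but it should be cited, since your Lemma \ref{lem:tool-4} constraints alone do not exclude a size-$2$ block equal to a perfect matching of the $K_{4}$.
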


\begin{proof}
Let $\mathcal{X}_{i}=(J_{i};X_{i},X_{i}')$ be a type-$(3,4)$ hyperplane-partition,
and $\mathcal{X}_{i}^{\ast}=(J_{i};-)$ a cohyperplane-partition.
Then, $\mathcal{X}_{i}^{\ast}$ cannot have type $4^{1}3^{1}$ by
Proposition \ref{prop:34vs34} and Lemma \ref{lem:tool-2}. If $\mathcal{X}_{i}^{\ast}$
has type $4^{1}1^{3}$, then for the size-$4$ set $Z$ of it, $X_{i}\nsubseteq Z$
by Lemma \ref{lem:tool-2}, and there is a size-$1$ set $Z'$ of
it that is contained in $X_{i}$. Hence, $E(N)\backslash(J_{i}\cup X_{i})$
is a size-$5$ CCI. If $\mathcal{X}_{i}^{\ast}$ has type $3^{1}1^{4}$,
then for the size-$3$ set $Z$ of it, we have $Z\neq X_{i}$, and
there is a size-$1$ set $Z'$ that is contained in $X_{i}$. So,
$E(N)\backslash(J_{i}\cup X_{i})$ is a size-$5$ CCI. If $\mathcal{X}_{i}^{\ast}$
has type $1^{7}$, we see that $N$ has a size-$5$ CCI in the same
manner. Thus we may assume all cohyperplane-partitions have type $3^{2}1^{1}$.

For two hyperplane-partitions $\mathcal{X}_{i}=(J_{i};X_{i},X_{i}')$
and $\mathcal{X}_{j}=(J_{j};X_{j},X_{j}')$, we know $X_{i}\neq X_{j}$,
$X_{i}\nsubseteq X_{j}'$ and $X_{j}\nsubseteq X_{i}'$ by Lemma \ref{lem:tool-2}.
If $\left|X_{i}\cap X_{j}\right|=2$, then $\left|X_{i}\backslash X_{j}\right|=\left|X_{j}\backslash X_{i}\right|=1$
and by Lemma \ref{lem:tool-1}, $X\backslash(X_{i}\triangle X_{j})$
is a size-$5$ CCI, where $\triangle$ denotes the symmetric difference.
Thus we may assume $\left|X_{i}\cap X_{j}\right|=1$ for all $i\neq j$.

Let $\mathcal{X}_{i}^{\ast}=(J_{i};\{x_{0}\},\dots)$ be a type-$(1,3,3)$
cohyperplane-partition. If $x_{0}\in X_{i}$, then $E(N)\backslash(J_{i}\cup X_{i})$
is a size-$5$ CCI. If $x_{0}\notin X_{i}$, then there exists another
$X_{j}$ that does not contain $x_{0}$, where $x_{0}\in X\backslash(X_{i}\cup X_{j})$.
Then, $(J_{i}\cap J_{j})\cup(X\backslash(X_{i}\triangle X_{j}))$
is a hyperplane with $\left|X\backslash(X_{i}\triangle X_{j})\right|=3$
by Lemma \ref{lem:tool-2}, while $J_{i}\cup\{x_{0}\}$ is a cohyperplane.
Hence, $E(N)\backslash(J_{i}\cup(X\backslash(X_{i}\triangle X_{j})))$
is a size-$5$ CCI.

By symmetry, the above argument remains valid when the terms ``hyperplane(-)''
and ``cohyperplane(-)'' are interchanged, and thus, the proof is
complete.
\end{proof}
\begin{thm}
\label{thm:main}The CCI conjecture holds for any CCI of size $k=7$.
\end{thm}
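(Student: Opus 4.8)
The plan is to complete the case analysis begun before the statement by handling case (2), where neither $\mathcal{X}_l$ nor $\mathcal{X}_l^{\ast}$ has type $4^13^1$ for indices $l$ outside a fixed pair $\{i,j\}$. Having resolved case (1) in Proposition \ref{prop:all34}, what remains is to produce a CCI of size $5$ whenever some $J_l$ carries both a hyperplane-partition and a cohyperplane-partition each of whose type contains a summand of $1$. The workhorse should be Lemma \ref{lem:1and1,1}: if a hyperplane-partition $(J_l;\{x\},\dots)$ and a cohyperplane-partition $(J_l;\{x^{\ast}\},\dots)$ both have a singleton block, then either $x\neq x^{\ast}$ (and we are done immediately by splitting off a hyperplane $J_l\cup\{x\}$ and a cohyperplane $J_l\cup\{x^{\ast}\}$), or they share the singleton and one of them has type $(1,1,\dots)$, which again yields a second singleton and finishes. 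So the first step is to argue that for the distinguished index, say $l$, both $\mathcal{X}_l$ and $\mathcal{X}_l^{\ast}$ having a summand of $1$ forces the hypotheses of Lemma \ref{lem:1and1,1}.

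First I would enumerate the admissible types more carefully. After removing size-$2$ blocks, a partition of $7$ on a common $J_l$ is one of $4^13^1,\ 3^21^1,\ 4^11^3,\ 3^11^4,\ 1^7$; among these, every type except $4^13^1$ has a singleton block, and in fact $4^11^3,\ 3^11^4,\ 1^7$ have at least three singletons while $3^21^1$ has exactly one. In case (2), for the index $l\notin\{i,j\}$ both partitions avoid type $4^13^1$, so each has a singleton. If their singletons differ, Lemma \ref{lem:1and1,1} applies directly. The only obstruction is when each partition has its \emph{sole} singleton at the same element, which can happen only if both have type $3^21^1$ with a common singleton $\{x\}=\{x^{\ast}\}$. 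This is the configuration I expect to be the main obstacle, and it is where the combinatorics of Lemmas \ref{lem:tool-1}, \ref{lem:tool-2}, and \ref{lem:tool-4} must be brought to bear.

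To break the $3^21^1$ versus $3^21^1$ tie, I would examine the partition of $X$ induced by the hyperplane-partition $(J_l;X_1,X_2,\{x\})$ and the cohyperplane-partition $(J_l;X_1^{\ast},X_2^{\ast},\{x\})$, where $|X_1|=|X_2|=|X_1^{\ast}|=|X_2^{\ast}|=3$. Since both live on the same flat $J_l$, I can use a Lemma \ref{lem:tool-2}-style argument on the pair of size-$3$ blocks: the two unordered partitions $\{X_1,X_2\}$ and $\{X_1^{\ast},X_2^{\ast}\}$ of $X\backslash\{x\}$ cannot coincide (otherwise we would build a flat of corank $1$ spanning too much), so the six-element set $X\backslash\{x\}$ is split two different ways into two triples. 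Intersecting these, at least one of $X_1\cap X_1^{\ast},\,X_1\cap X_2^{\ast}$ is a singleton $\{x_1\}\subseteq X_1$ whose complementary structure lets me peel off a corank-$2$ flat; the goal is to show that $X\backslash\{x,x_1\}$ (or a symmetric variant) is a CCI of size $5$ by checking that the appropriate union of a hyperplane and cohyperplane on $J_l$ has size $2k-3=11$, exactly as in the proofs of Lemmas \ref{lem:tool-1} and \ref{lem:tool-4}. The final step is to invoke the duality symmetry already used at the end of Proposition \ref{prop:all34}: since the roles of hyperplane and cohyperplane are interchangeable, the remaining subcases reduce to ones already treated, and combining case (1) with the fully resolved case (2) establishes that every CCI-envelope of a size-$7$ CCI has a CCI of size $5$, which is Theorem \ref{thm:main}.
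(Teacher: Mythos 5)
Your proposal has a genuine gap: you run the whole argument on a single flat $J_l$, but every lemma you invoke requires two \emph{distinct} size-$(k-3)$ subsets of $Y$. Lemma \ref{lem:1and1,1} explicitly assumes $J\neq J^{\ast}$, and this hypothesis is not decorative: since $\left|Y\right|=k-2$ and $\left|J\right|=\left|J^{\ast}\right|=k-3$, the condition $J\neq J^{\ast}$ forces $J\cup J^{\ast}=Y$, so the union of the hyperplane $J\cup\{x\}$ and the cohyperplane $J^{\ast}\cup\{x^{\ast}\}$ is exactly $Y\cup\{x,x^{\ast}\}$ and its complement $X\backslash\{x,x^{\ast}\}$ has size $k-2$. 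In your setting the hyperplane $J_{l}\cup\{x\}$ and the cohyperplane $J_{l}\cup\{x^{\ast}\}$ both miss the single element of $Y\backslash J_{l}$, so the complement of their union has size $k-1=6$, not $5$; the step ``we are done immediately'' fails. The same defect infects your tie-breaking argument: Lemma \ref{lem:tool-2} is stated for two hyperplane-partitions of $N$ with $J\neq J'$, and its proof intersects two hyperplanes of $N$ to obtain a corank-$2$ flat, so it says nothing about a hyperplane-partition and a cohyperplane-partition sharing the same $J_{l}$. Indeed, Proposition \ref{prop:34vs34} asserts that a type-$4^{1}3^{1}$ hyperplane-partition and cohyperplane-partition must \emph{coincide} as partitions of $X$, so there is no general principle forbidding such coincidence, and your claim that $\{X_{1},X_{2}\}\neq\{X_{1}^{\ast},X_{2}^{\ast}\}$ is unsupported.

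The repair, which is what the paper does, is to compare partitions attached to \emph{different} $J$'s. The paper's Case 1 takes a hyperplane-partition $\mathcal{X}_{i}$ and a cohyperplane-partition $\mathcal{X}_{j}^{\ast}$ with $i\neq j$ whose singletons differ and applies Lemma \ref{lem:1and1,1} legitimately. Negating this over all pairs of distinct indices forces each $\mathcal{X}_{i}$ and $\mathcal{X}_{i}^{\ast}$ to have a \emph{unique} singleton and all of these singletons to be one common element $x_{0}$; among the admissible types only $3^{2}1^{1}$ has a unique singleton, so all six partitions have that type. The paper then compares the two \emph{hyperplane}-partitions $\mathcal{X}_{1}=(J_{1};\{x_{0}\},X_{1},X_{1}')$ and $\mathcal{X}_{2}=(J_{2};\{x_{0}\},X_{2},X_{2}')$ (same kind, different $J$'s), uses Lemma \ref{lem:tool-2} to produce a hyperplane containing $(J_{1}\cap J_{2})\cup(X_{1}\triangle X_{2})$, and pairs it with the cohyperplane $J_{1}\cup\{x_{0}\}$; the union of these two sets covers all of $Y$ and exactly three elements of $X$, yielding a size-$5$ CCI. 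Your outline never introduces a second index, so it cannot be completed as written.
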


\begin{proof}
By Proposition \ref{prop:all34}, without loss of generality, we may
assume that for any $i\in[3]$, the types of $\mathcal{X}_{i}$ and
$\mathcal{X}_{i}^{\ast}$ contain a summand of $1$. Then, we have
two cases:
\begin{casenv}
\item There exists a hyperplane-partition $\mathcal{X}_{i}=(J_{i};\{x_{i}\},\dots)$
and a cohyperplane-partition $\mathcal{X}_{j}^{\ast}=(J_{j};\{x_{j}\},\dots)$
with $x_{i}\neq x_{j}$ for distinct $i$ and $j$ in $\left\{ 1,2,3\right\} $.
In this case, by Lemma \ref{lem:1and1,1}, $N$ satisfies the CCI
conjecture.
\item All $\mathcal{X}_{i}$ and $\mathcal{X}_{i}^{\ast}$ for $i\in\left[3\right]$
have type $3^{2}1^{1}$ and their size-$1$ sets coincide. In this
case, $\mathcal{X}_{1}=(J_{1};\{x_{0}\},X_{1},X_{1}')$ and $\mathcal{X}_{2}=(J_{2};\{x_{0}\},X_{2},X_{2}')$
differ as partitions of $X$ by Lemma \ref{lem:tool-2}. Then, $\left|X_{1}\cap X_{2}\right|=\left|X_{1}'\cap X_{2}'\right|\neq0$
and $\left|X_{1}\cap X_{2}'\right|=\left|X_{1}'\cap X_{2}\right|\neq0$,
and these two numbers sum up to $3$. Let $\left|X_{1}\cap X_{2}\right|=2$
and $\left|X_{1}\cap X_{2}'\right|=1$ without loss of generality.
Using Lemma \ref{lem:tool-2}, we see that either $(J_{1}\cap J_{2})\cup(X_{1}\triangle X_{2})$
or $(J_{1}\cap J_{2})\cup(X_{1}\triangle X_{2})\cup\{x_{0}\}$ is
a hyperplane, while $J_{1}\cup\{x_{0}\}$ is a cohyperplane. Therefore,
$E(N)\backslash(J_{1}\cup(X_{1}\triangle X_{2})\cup\{x_{0}\})$ is
a size-$5$ CCI.
\end{casenv}
Thus, $N$ satisfies the CCI conjecture, and the proof is complete.
\end{proof}

\end{document}